\date{}
\newtheorem{theorem}{Theorem}[section]
\numberwithin{equation}{section}
\begin{document}
\setlength{\unitlength}{1cm}



\vskip1.5cm

 \centerline { \textbf{Modified Expansion Theorem  for Sturm-Liouville problem }}
 \centerline { \textbf{with transmission conditions }}

\vskip.2cm


\vskip.8cm \centerline {\textbf{K.Aydemir$^*$ and O. Sh. Mukhtarov
$^*$ }}

\vskip.5cm

\centerline {$^*$Department of Mathematics, Faculty of Science,}
\centerline {Gaziosmanpa\c{s}a University,
 60250 Tokat, Turkey}
\centerline {e-mail : {\tt omukhtarov@yahoo.com,
kadriye.aydemir@gop.edu.tr }}



\vskip.5cm \hskip-.5cm{\small{\bf Abstract :}This paper is devoted
to the derivation of expansion a associated with a discontinuous
Sturm-Liouville problems defined on $[-\pi, 0)\cup(0,\pi]$. We
derive an eigenfunction expansion theorem for the Green's function
of the problem as well as a theorem of uniform convergence of a
certain class of functions.

\vskip0.3cm\noindent {\bf Keywords :} \ Sturm-Liouville problems,
transmission conditions, expansions theorem, Carleman equation.

\section{\textbf{Introduction}}
The importance of Sturm-Liouville problems for spectral methods lies
in the fact that the spectral approximation of the solution of a
differential equation is usually regarded as a finite expansion of
eigenfunctions of a suitable Sturm-Liouville problem. The issue of
expansion in generalized eigenfunctions is a classical one going
back at least to Fourier.  A relatively recent impact is due to the
study of wave propagation in random media \cite{ge, st}, where
eigenfunction expansions are an important input in the proof of
localization. The use of this tool is settled by classical results
in the Schrödinger operator case. But with the study of operators
related with classical waves, \cite{ge, st1}, a need for more
general results on eigenfunction expansion became apparent.
Eigenfunction expansions  problems for Sturm-Liouville problems have
been investigated by many authors, see \cite{le1, ti}

 In this paper we shall
investigate one discontinuous eigenvalue problem which consists of
Sturm-Liouville equation,
\begin{equation}\label{1.1}
\Gamma (y):=-y^{\prime \prime }(x,\lambda)+ q(x)y(x,\lambda)=\lambda
y(x,\lambda)
\end{equation}
to hold in finite interval $(-\pi, \pi) $ except at one inner point
$0 \in (-\pi, \pi) $ , where discontinuity in $u  \ \textrm{and} \
u'$ are prescribed by transmission conditions
\begin{equation}\label{1.2}
\Gamma_{1}(y):=a_{1}y'(0-,\lambda)+a_{2}y(0-,\lambda)+a_{3}y'(0+,\lambda)+a_{4}y(0+,\lambda)=0,
\end{equation}
\begin{equation}\label{1.3}
\Gamma_{2}(y):=b_{1}y'(0-,\lambda)+b_{2}y(0-,\lambda)+b_{3}y'(0+,\lambda)+b_{4}y(0+,\lambda)=0,
\end{equation}
together with the  boundary conditions
\begin{equation}\label{1.4}
 \Gamma_{3}(y):=\cos \alpha y(-\pi,\lambda)+\sin\alpha y'(-\pi,\lambda)=0,
\end{equation}
\begin{equation}\label{1.5}
\Gamma_{4}(y):=\cos\beta y(\pi,\lambda)+\sin\beta y'(\pi,\lambda)=0.
\end{equation}
 Such problems with point interactions are
also studied in \cite{os1}, etc. Boundary value problems which may
have discontinuities in the solution or its derivative at an
interior point  are also studied. Conditions are imposed on the left
and right limits of solutions and their derivatives at an interior
point
 and are often called transmission conditions" or interface
conditions". These problems often arise in varied assortment of
physical transfer problems, see \cite{li, ya}. Also, some problems
with transmission conditions arise in thermal conduction problems
for a thin laminated plate (i.e., a plate composed by materials with
different characteristics piled in the thickness).

 It is the purpose of this paper to present a new and somewhat extensive family of
orthogonal expansions associated with an discontinuous
Sturm-liouville problem. By using an own technique we introduce a
new equivalent inner product in the Hilbert space
$L_{2}(-\pi,0)\oplus L_{2}(0,\pi)$ and a linear operator in it such
a way that the considered problem can be interpreted as eigenvalue
problem for this operator. Using a new approach to the now classical
theory of Sturm-Liouville eigenfunction expansions, based
essentially on the method of integral equations. Moreover, we
introduce a expansion method for solution of carleman's equation.

\section{ Preliminary Results
 }
Let T=$ \left[%
\begin{array}{cccc}
  a_{1} & a_{2} & a_{3} & a_{4} \\
  b_{1} & b_{2} & b_{3} & b_{4}
  \\
\end{array} %
 \right]. $
 Denote  the determinant of the i-th
 and
j-th columns of the matrix T  by $\rho_{ij}$. Note that throughout
this study we shall assume that $ \rho_{12}>0 \ \ \textrm{and} \
\rho_{34}>0.$

In this section we shall define two basic solutions $\phi(x,\lambda)
\ \textrm{and} \ \chi(x,\lambda)$ by own technique as follows.  At
first, let us consider solutions of the equation (\ref{1.1})  on the
left-hand $\left[ -\pi,0\right)$ of the considered interval
$\left[\pi,0\right)\cup(0,\pi]$ satisfying the initial conditions
\begin{eqnarray}\label{(2.10)}
\label{tam1}
 \ \ y(-\pi,\lambda)=\sin\alpha , \
\ \frac{\partial y(-\pi,\lambda)}{\partial x}=-\cos\alpha
\end{eqnarray}
By virtue of well-known existence and uniqueness theorem of ordinary
differential equation theory  this initial-value problem for each
$\lambda $ has a unique solution $\phi _{1}(x,\lambda )$. Moreover
[\cite{ti}, Teorem 7] this solution
is an entire function of $%
\lambda $ for each fixed $x\in \left[ -\pi,0\right).$ Using this
solutions we can prove that the equation (\ref{1.1}) on  the
right-hand interval $\in (0,\pi]$ of the considered interval
$\left[-\pi,0\right)\cup(0,\pi]$ has the solution
$u=\phi_2(x,\lambda)$ satisfying the initial conditions
\begin{eqnarray}\label{8}
&&y(0,\lambda) =\frac{1}{\rho_{12}}(\rho_{23}\phi_{1}(0,\lambda
)+\rho_{24}\frac{\partial\phi_{1}(0,\lambda )}{\partial x})\\
&& \label{9} y^{\prime }(0,\lambda)
=\frac{-1}{\rho_{12}}(\rho_{13}\phi_{1}(0,\lambda
)+\rho_{14}\frac{\partial\phi_{1}(0,\lambda )}{\partial x}).
\end{eqnarray}
By applying the method of \cite{os1}  we can prove that  the
equation (\ref{1.1}) on $ (0,\pi]$ has an unique solution
$\phi_{2}(x,\lambda)$ satisfying the conditions (\ref{8})-(\ref{9})
which also is an entire function of the parameter $\lambda $ for
each fixed $x \in (0,\pi]$. Consequently, the function  $\phi
(x,\lambda )$ defined by
\begin{equation}
\phi (x,\lambda )=\{
\begin{array}{c}
\phi _{1}(x,\lambda )\text{ \ for }x\in \lbrack -\pi,0) \\
\phi _{2}(x,\lambda )\text{ \ for }x\in (0,\pi].%
\end{array}
\label{(3.16)}
\end{equation}
 satisfies equation $(\ref{1.1})$,  the first boundary condition
$(\ref{1.4})$ and the both transmission conditions $(\ref{1.2})$ and $(%
\ref{1.3})$. Similarly, $\chi_{2}(x,\lambda)$   be solutions of
equation (\ref{1.1}) on the left-right interval $(0, \pi]$ subject
to initial conditions
\begin{equation}\label{4}
 \ \ y(\pi,\lambda)=-\sin\beta , \
\  \frac{\partial y(\pi,\lambda)}{\partial x}=\cos\beta.
\end{equation}
By virtue of [\cite{ti}, Teorem 7] each of these solutions are
entire functions of $\lambda$ for fixed x. By applying the same
technique we can prove  there is an unique solution
$\chi_{1}(x,\lambda)$ of equation (\ref{1.1}) the left-hand interval
$[-\pi, 0) \ $ satisfying the initial condition
\begin{eqnarray}\label{11}
&&y(0,\lambda) =\frac{-1}{\rho_{34}}(\rho_{14}\chi_{2}(0,\lambda
)+\rho_{24}\frac{\partial\chi_{2}(0,\lambda )}{\partial x}),\\
&& \label{12} y^{\prime }(0,\lambda)
=\frac{1}{\rho_{34}}(\rho_{13}\chi_{2}+\Delta_{23}\frac{\partial\chi_{2}(0,\lambda
)}{\partial x}).
\end{eqnarray}
 By applying the similar technique as in \cite{os1} we
can prove that the solutions $\ \chi_{1}(x,\lambda)$ are also an
entire functions of parameter $\lambda$ for each fixed x.
Consequently, the function $ \chi(x,\lambda)$ defined  as
\begin{displaymath} \chi(x,\lambda)=\left
\{\begin{array}{ll}
\chi_{1}(x,\lambda), & x\in [-\pi, 0) \\
\chi_{2}(x,\lambda), & x\in (0, \pi] \\
\end{array}\right.
\end{displaymath}
satisfies the equation (\ref{1.1}) on whole $[-\pi, 0)\cup (0,
\pi]$, the other boundary condition (\ref{1.5}) and the both
transmission conditions (\ref{1.2}) and (\ref{1.3}).
 In the Hilbert Space
$\mathcal{H}=L_{2}[-\pi,0)  \oplus L_{2}(0,\pi] $ of two-component
vectors we define an inner product by
$$
<y,z>_{\mathcal{H}}:= \ \rho_{12} \int\limits_{-\pi}^{0}
y(x)\overline{z(x)}dx + \rho_{34} \int\limits_{0}^{\pi}
y(x)\overline{z(x)}dx
$$
for $y= y(x), \ z= z(x) \ \in \mathcal{H}$. We introduce the linear
operator $A:\mathcal{H}\rightarrow \mathcal{H}$  with domain of
definition satisfying the
following conditions\\
 i) $y \ \textrm{and} \ y'$ are absolutely
continuous in each of intervals $[-\pi,0)$ \ and $(0,\pi]$ and has a
finite limits $y(c\mp) \ \textrm{and} \ y'(c\mp)$\\
 ii) $\Gamma y(x) \in
\mathcal{H}, \ \Gamma_1 y(x)=\Gamma_2 y(x)=\Gamma_3 y(x)=\Gamma_4 y(x)=0,$\\
Obviously $D(A)$ is a linear subset dense in $\mathcal{H}$. We put
$$(Ay)(x)=\Gamma y(x), \ x\in \mathcal{H}
$$ for $y \in D(A)$. Then the problem $(\ref{1.1})-(\ref{1.5})$ is
equivalent to the equation
$$Ay=\lambda y $$ in the Hilbert space $\mathcal{H}$.
 Taking in view that the   Wronskians
  $ W(\phi_{i},\chi_{i};x) :=
\phi_{i}(x,\lambda)\chi'_{i}(x,\lambda)-\phi'_{i}(x,\lambda)\chi_{i}(x,\lambda)$
\ are independent of variable x  we shall denote  $
w_{i}(\lambda)=W(\phi_{i}, \chi_{i};x) \ (i=1,2)$. By using
(\ref{1.2}) and (\ref{1.3}) we have $ \label{7}
\rho_{12}w_{1}(\lambda)= \rho_{34}w_{2}(\lambda) $ for each $\lambda
\in \mathbb{C}$. It is convenient to introduce the notation
 \begin{equation}\label{kn}w(\lambda):=\rho_{34} w_{1}(\lambda) =  \rho_{12}
w_{2}(\lambda).\end{equation}
\begin{theorem}
For all $y, z \in D(A),$  the equality
\begin{eqnarray}\label{sim}<Ay,z>=<y,Az>
\end{eqnarray}
holds.
\end{theorem}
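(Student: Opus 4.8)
The plan is to prove the symmetry of $A$ by the classical route: establish a Lagrange (Green's) identity for $\Gamma$ on the broken interval, and then check that every boundary/interface term it produces vanishes because $y,z\in D(A)$. The regularity part of the domain condition (i) — absolute continuity of $y$ and $y'$ on each of $[-\pi,0)$ and $(0,\pi]$, with one-sided limits at $0$ — is exactly what legitimizes the integrations by parts, and the fact that $q$ is real-valued gives $\overline{\Gamma z}=\Gamma\overline{z}$, so the computation is the usual one carried out separately on the two pieces.

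Concretely, writing $W(y,\overline z;x):=y(x)\overline{z'(x)}-y'(x)\overline{z(x)}$ and integrating by parts twice on each subinterval, I expect to obtain
\begin{align*}
\langle Ay,z\rangle-\langle y,Az\rangle
&=\rho_{12}\int_{-\pi}^{0}\big[(\Gamma y)\overline z-y\,\overline{\Gamma z}\big]\,dx
+\rho_{34}\int_{0}^{\pi}\big[(\Gamma y)\overline z-y\,\overline{\Gamma z}\big]\,dx\\
&=\rho_{12}\big[W(y,\overline z;0-)-W(y,\overline z;-\pi)\big]
+\rho_{34}\big[W(y,\overline z;\pi)-W(y,\overline z;0+)\big].
\end{align*}
Thus the theorem reduces to the three assertions $W(y,\overline z;-\pi)=0$, $W(y,\overline z;\pi)=0$, and $\rho_{12}\,W(y,\overline z;0-)=\rho_{34}\,W(y,\overline z;0+)$. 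Note that since all the coefficients in $\Gamma_1,\dots,\Gamma_4$ are real, $\overline{z}$ satisfies exactly the same homogeneous conditions as $z$, which is what lets me invoke condition (ii) for $\overline z$ as well as for $y$.

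For the two endpoint terms: at $x=-\pi$ both vectors $(y(-\pi),y'(-\pi))$ and $(\overline z(-\pi),\overline{z'}(-\pi))$ are annihilated by the same nonzero row $(\cos\alpha,\sin\alpha)$ coming from $\Gamma_3$, hence are linearly dependent, so their Wronskian $W(y,\overline z;-\pi)$ vanishes; the term at $x=\pi$ vanishes the same way using $\Gamma_4$. For the interface term — which I expect to be the only delicate point — I would use that $\rho_{12}\neq0$ to solve the pair $\Gamma_1(y)=\Gamma_2(y)=0$ for $(y'(0+),y(0+))$ as $M\,(y'(0-),y(0-))^{T}$ with a fixed $2\times2$ matrix $M$ built from the minors $\rho_{ij}$ (divided by $\rho_{34}$), and likewise for $\overline z$ with the same $M$. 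A short computation gives $\det M=(\rho_{13}\rho_{24}-\rho_{14}\rho_{23})/\rho_{34}^{2}$, and the Plücker relation $\rho_{12}\rho_{34}=\rho_{13}\rho_{24}-\rho_{14}\rho_{23}$ for the $2\times4$ matrix $T$ reduces this to $\det M=\rho_{12}/\rho_{34}$. Combining this with the elementary identity $(Mp)_2(Mq)_1-(Mp)_1(Mq)_2=\det M\,(p_2q_1-p_1q_2)$ applied to $p=(y'(0-),y(0-))$ and $q=(\overline{z'}(0-),\overline{z}(0-))$ yields $W(y,\overline z;0+)=(\rho_{12}/\rho_{34})\,W(y,\overline z;0-)$, i.e.\ $\rho_{12}\,W(y,\overline z;0-)=\rho_{34}\,W(y,\overline z;0+)$. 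This is precisely the linear-algebra computation already carried out to obtain the relation $\rho_{12}w_1(\lambda)=\rho_{34}w_2(\lambda)$ in $(\ref{kn})$, now applied to the pair $(y,\overline z)$ instead of $(\phi,\chi)$; the hypotheses $\rho_{12}>0$, $\rho_{34}>0$ are exactly what make $M$ well defined (and the weights in $\langle\cdot,\cdot\rangle_{\mathcal H}$ genuinely positive). Substituting the three facts into the displayed identity gives $\langle Ay,z\rangle-\langle y,Az\rangle=0$, as required. The main obstacle is purely bookkeeping: keeping the signs of the minors $\rho_{ij}$ straight and citing the correct form of the Plücker identity; the integration by parts and the endpoint argument are routine.
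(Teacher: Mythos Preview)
Your proof is correct and follows essentially the same route as the paper: integrate by parts on each subinterval to reduce $\langle Ay,z\rangle-\langle y,Az\rangle$ to Wronskian boundary terms, kill the terms at $\pm\pi$ using $\Gamma_3,\Gamma_4$, and match the two interface terms via $\rho_{12}W(y,\overline z;0-)=\rho_{34}W(y,\overline z;0+)$. The only difference is that the paper simply asserts this last relation as a ``direct calculation,'' whereas you actually carry it out by expressing $(y'(0+),y(0+))$ as $M(y'(0-),y(0-))^T$ and computing $\det M=\rho_{12}/\rho_{34}$ via the Pl\"ucker identity (note a small slip: it is $\rho_{34}\neq0$, not $\rho_{12}\neq0$, that lets you invert for the $0+$ data, as you in fact do).
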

\begin{proof}
 Integrating by parts we have for all $y, z \in D(A),$
\begin{eqnarray}\label{2.z}
<Ay,z> &=& \rho_{12}\int\limits_{-\pi}^{0} \Gamma
y(x)\overline{z(x)}dx +\rho_{34}\int_{0}^{\pi} \Gamma
y(x)\overline{z(x)}dx
\nonumber\\
&=& \rho_{12}\int\limits_{-\pi}^{0} y(x)\overline{\Gamma z(x)}dx
+\rho_{34}\int\limits_{0}^{\pi} y(x)\overline{\Gamma z(x)}dx  +
\rho_{12}W[y, \overline{z};0-]\nonumber\\&-&\rho_{12}W[y,
\overline{z};-\pi] +\rho_{34} W[y,\overline{z};\pi] - \rho_{34} \
W[y,\overline{z};0]
\nonumber\\
&=&\ <y,Az>  + \rho_{12}W[y_{0}, \overline{z};0] - \rho_{12} W[y,
\overline{z};-\pi]\nonumber\\ &+&\rho_{34} W[y,\overline{z};\pi] -
\rho_{34}\ W[y,\overline{z};0]
\end{eqnarray}
From the boundary  conditions $(\ref{1.2})$-$(\ref{1.3})$ it is
follows obviously that
\begin{equation}\label{c1}W(y, \overline{z};-\pi)=0
 \ \textrm{and} \  W(y, \overline{z};\pi)=0
\end{equation}
The direct calculation gives
\begin{equation}\label{c3}\rho_{12}W(y,
\overline{z};0) = \rho_{34}W(y, \overline{z};0).
\end{equation}
Substituting (\ref{c1}) and (\ref{c3}) in (\ref{2.z}) we obtain the
equality (\ref{sim}).
\end{proof}
Relation (\ref{sim}) shows that the operator A is symmetric and
selfa-adjoint. Therefore all eigenvalues of the operator A are real
and two eigenfunctions corresponding to the distinct eigenvalues are
orthogonal in the sense of the following equality
\begin{eqnarray}\label{2.3}\rho_{12}\int\limits_{-\pi}^{0} y(x)z(x)dx + \rho_{34} \int\limits_{0}^{\pi}
y(x)z(x)dx=0.
\end{eqnarray}

\section{Expansion theorem by Green function's method%
} To present its explicit form we introduce the Green function

\begin{eqnarray}\label{2.15}
G(x,s;\lambda)=\left\{\begin{array}{c}
 \frac{\phi(x,\lambda)\chi(s,\lambda)}{\omega(\lambda)} \ \ \ \ \textrm{for} \  -\pi\leq s\leq x\leq \pi, \ \  x, s \neq 0\\
                \\
  \frac{\phi(s,\lambda)\chi(x,\lambda)}{\omega(\lambda)} \ \ \ \ \textrm{for} \  -\pi\leq x\leq s\leq \pi, \ \  x, s\neq 0 \\
             \end{array}\right.
\end{eqnarray}
for $x, s \in [-\pi,0)\cup(0,\pi]$ where $\phi(x,\lambda) \ \textrm{
and} \ \chi(x,\lambda)$ are solutions of the boundary-value-
transmission problem (\ref{1.1})-(\ref{1.5}). It symmetric with
respect to x and s, and real-valued for real $\lambda$. We show that
the function
\begin{eqnarray}\label{2.16}
y(x,\lambda)&=&\rho_{12}\int\limits_{-\pi}^{0} G(x,s;\lambda)f(s)ds+
\rho_{34} \int\limits_{0}^{\pi} G(x,s;\lambda)f(s)ds
\end{eqnarray}
called a resolvent is a solution of the equation
\begin{eqnarray}\label{gr}
y''+\{\lambda-q(x)\}y=f(x), \
\end{eqnarray}
(where $f(x)\neq0$ is a continuous function), satisfying the
boundary-transmission conditions (\ref{1.2})-(\ref{1.5}). We can
assume that $\lambda=0$ is not an eigenvalue. Otherwise, we take a
fixed number $\eta$, and consider the boundary-value-transmission
problem
\begin{equation}\label{.1}
y^{\prime \prime }(x,\lambda)+ \{(\lambda+\eta)-q(x)\}
y(x,\lambda)=0
\end{equation}
\begin{equation}\label{.2}
 a_{1}y'(0-,\lambda)+a_{2}y(0-,\lambda)+a_{3}y'(0+,\lambda)+a_{4}y(0+,\lambda)=0,
\end{equation}
\begin{equation}\label{.3}
 b_{1}y'(0-,\lambda)+b_{2}y(0-,\lambda)+b_{3}y'(0+,\lambda)+b_{4}y(0+,\lambda)=0,
\end{equation}
\begin{equation}\label{14}
 \cos \alpha y(-\pi,\lambda)+\sin\alpha y'(-\pi,\lambda)=0,
\end{equation}
\begin{equation}\label{14}
 \cos\beta y(\pi,\lambda)+\sin\beta y'(\pi,\lambda)=0
\end{equation}
 with the same eigenfunction as for the problem
 (\ref{1.1})-(\ref{1.4}). All the eigenvalues are shifted through
 $\eta$ to the right. It is evident that $\eta$ can be selected so
 that 0 is not an eigenvalue of the new problem.

 Let $G(x,s;0)=G(x,s)$ then the function

 \begin{eqnarray}\label{gr3}
y(x,\lambda)&=& \rho_{12}\int\limits_{-\pi}^{0} G(x,s)f(s)ds +
\rho_{34} \int\limits_{0}^{\pi} G(x,s)f(s)ds
\end{eqnarray}
 is a solution of the equation $y''-q(x)y=f(x),$ and satisfies the boundary-transmission conditions (\ref{1.2})-(\ref{1.4}). We rewrite (\ref{gr}) in the form
\begin{eqnarray}\label{gr5}
y''-q(x)y=f(x)-\lambda y
\end{eqnarray}
Thus, the  homogeneous problem ($f(x)\equiv0$) is equivalent to the
integral equation
\begin{eqnarray}\label{gr7}
y(x,\lambda)+\lambda \{ \rho_{12} \int\limits_{-\pi}^{0}
G(x,s)y(s)ds+ \rho_{34}  \int\limits_{0}^{\pi} G(x,s)y(s)ds \}  =0
\end{eqnarray}

Denote by $\lambda_{0}, \lambda_{1},
\lambda_{2},...,\lambda_{n},...$ the collection of all the
eigenvalues of the problem (\ref{1.1})-(\ref{1.4}), and the
corresponding normalized eigenfunctions by $\varphi_{0},
\varphi_{1}, \varphi_{2},...,\varphi_{n},...$
$$K(x,\xi)=\sum_{n=0}^{\infty}\frac{\varphi_{n}(x)\varphi_{n}(\xi)}{\lambda_{n}}.$$

By the asymptotic formulas for the eigenvalues, obtained in previous
section, the series for $H(x,\xi)$ converges absolutely and
uniformly; therefore, $K(x,\xi)$ is continuous. Consider the kernel
$$P(x,\xi)=G(x,\xi)+ K(x,\xi)=G(x,\xi)+\sum_{n=0}^{\infty}\frac{\varphi_{n}(x)\varphi_{n}(\xi)}{\lambda_{n}}$$
which is obviously continuous and symmetric. By familiar theorem in
the theory of integral equations, any symmetric kernel $P(x,\xi)$
which is not identically zero has at least one eigenfunction
\cite{pe}, i.e., there is a number $\lambda_{0}$ and a function
$u(x)\neq0$ satisfying the equation
\begin{eqnarray}\label{gr8}
u(x,\lambda)+\lambda_{0}\{ \rho_{12} \int\limits_{-\pi}^{0}
P(x,\xi)u(\xi)d\xi+ \rho_{34}  \int\limits_{0}^{\pi}
P(x,\xi)u(\xi)d\xi\}=0 .\end{eqnarray}
 Thus, if we show that the kernel has no
eigenfunctions, we obtain $P(x,\xi)\equiv 0,$ i.e.,
\begin{eqnarray}\label{gr9}
G(x,\xi)=-\sum_{n=0}^{\infty}\frac{\varphi_{n}(x)\varphi_{n}(\xi)}{\lambda_{n}}.
\end{eqnarray}
Hence, to obtain the completeness of eigenfunctions is now easy. It
follows from the equation (\ref{gr7}) that
\begin{eqnarray}\label{gr10}
\rho_{12} \int\limits_{-\pi}^{0} G(x,\xi)\varphi(\xi)d\xi+ \rho_{34}
\int\limits_{0}^{\pi}
G(x,\xi)\varphi(\xi)d\xi=-\frac{1}{\lambda_{n}}\varphi_{n}(x)\end{eqnarray}
therefore,
\begin{eqnarray}\label{gr11}
\rho_{12} \int\limits_{-\pi}^{0} P(x,\xi)\varphi(\xi)d\xi+ \rho_{34}
\int\limits_{0}^{\pi} P(x,\xi)\varphi(\xi)d\xi=0\end{eqnarray} i.e.,
the kernel $Q(x,\xi)$ is orthogonal to all eigenfunctions of the
boundary-value-transmission problem (\ref{1.1})-(\ref{1.4}). Let
$u(x)$ be a solution of the integral equation (\ref{gr8}). We show
that $u(x)$ is orthogonal to all $\varphi_{n}(x)$. In fact, it
follows from (\ref{gr8})
\begin{eqnarray*}\label{gr12} 0= \rho_{12}\int\limits_{-\pi}^{0}u(x)\varphi_{n}(x)dx+
\rho_{34}  \int\limits_{0}^{\pi} u(x)\varphi_{n}(x)\end{eqnarray*}
therefore,
\begin{eqnarray*}\label{gr13} 0&=& u(x,\lambda)+\lambda_{0}\{ \rho_{12} \int\limits_{-\pi}^{0} P(x,\xi)u(\xi)d\xi+
\rho_{34}  \int\limits_{0}^{\pi}
P(x,\xi)u(\xi)d\xi\}\nonumber\\&=&u(x,\lambda)+\lambda_{0}\{
\rho_{12} \int\limits_{-\pi}^{0} G(x,\xi)u(\xi)d\xi+ \rho_{34}
\int\limits_{0}^{\pi} G(x,\xi)u(\xi)d\xi\},\end{eqnarray*} i.e.,
$u(x,\lambda)$ is an eigenfunction of the
boundary-value-transmission problem (\ref{1.1})-(\ref{1.4}). Since
it is orthogonal to all $\varphi_{n}(x)$, it is also orthogonal to
itself, with the consequence that $u(x,\lambda)=0 \ \textrm{and} \
P(x,\xi)=0.$ The formula (\ref{gr9}) is thus proved.
\begin{theorem}\label{teo1}(Expansion Theorem) If $f(x)$ has a
continuous  second derivative and satisfies the
boundary-transmission conditions (\ref{1.2})-(\ref{1.4}), then
$f(x)$ can be expanded into an absolutely and uniformly convergent
Fourier series of eigenfunctions of the  boundary-value-transmission
problem (\ref{1.1})-(\ref{1.4}) on $[-1, 0)\cup(0, 1]$, i.e.,
\begin{eqnarray}\label{gr14}
f(x)=\sum_{n=0}^{\infty}c_{n}\varphi_{n}(x)
\end{eqnarray}
where $c_{n}$ are the Fourier coefficients of f given by
\begin{eqnarray}\label{grr14}
c_{n}=\rho_{12}\int\limits_{-\pi}^{0}f(x)\varphi_{n}(x)dx+ \rho_{34}
\int\limits_{0}^{\pi} f(x)\varphi_{n}(x)dx.
\end{eqnarray}
\end{theorem}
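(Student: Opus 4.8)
\emph{Proof proposal.} The plan is to observe that, under the stated hypotheses, $f$ itself belongs to $D(A)$, so it is reproduced by the resolvent at $\lambda=0$, and then to feed in the bilinear expansion of the Green's function already established in (\ref{gr9}). Concretely, set $g:=\Gamma f=-f''+qf$. Since $f$ has a continuous second derivative on each of $[-\pi,0]$ and $[0,\pi]$ and satisfies the transmission conditions (\ref{1.2})--(\ref{1.3}) and the boundary conditions (\ref{1.4})--(\ref{1.5}), we have $f\in D(A)$ and $g=Af\in\mathcal H$. Because $\lambda=0$ is not an eigenvalue, $A$ is invertible, and by (\ref{gr3}) the inverse is the integral operator whose kernel is $-G(x,s)$ relative to the weighted measure; hence
\[
f(x)=-\rho_{12}\int_{-\pi}^{0}G(x,s)g(s)\,ds-\rho_{34}\int_{0}^{\pi}G(x,s)g(s)\,ds .
\]

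Next I would substitute the expansion (\ref{gr9}) of $G(x,s)$ and integrate term by term, which is legitimate since the series $\sum_n\varphi_n(x)\varphi_n(s)/\lambda_n$ converges uniformly in $s$ for fixed $x$. This gives the pointwise identity $f(x)=\sum_{n=0}^{\infty}\lambda_n^{-1}\langle g,\varphi_n\rangle_{\mathcal H}\,\varphi_n(x)$, where $\langle g,\varphi_n\rangle_{\mathcal H}=\rho_{12}\int_{-\pi}^{0}g\varphi_n+\rho_{34}\int_{0}^{\pi}g\varphi_n$. To see that the coefficients here coincide with those in (\ref{grr14}), use the symmetry relation (\ref{sim}) together with $A\varphi_n=\lambda_n\varphi_n$ and the reality of $\lambda_n$: $\langle g,\varphi_n\rangle_{\mathcal H}=\langle Af,\varphi_n\rangle_{\mathcal H}=\langle f,A\varphi_n\rangle_{\mathcal H}=\lambda_n\langle f,\varphi_n\rangle_{\mathcal H}$, so $\lambda_n^{-1}\langle g,\varphi_n\rangle_{\mathcal H}=\langle f,\varphi_n\rangle_{\mathcal H}=c_n$. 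Thus (\ref{gr14}) holds pointwise with $c_n$ as in (\ref{grr14}).

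It remains to upgrade this to absolute and uniform convergence. For $M>N$, Cauchy--Schwarz in $\ell^2$ yields
\[
\sum_{n=N}^{M}|c_n\varphi_n(x)|
=\sum_{n=N}^{M}\frac{|\langle g,\varphi_n\rangle_{\mathcal H}|}{|\lambda_n|}\,|\varphi_n(x)|
\le\Big(\sum_{n=N}^{M}|\langle g,\varphi_n\rangle_{\mathcal H}|^2\Big)^{1/2}\Big(\sum_{n=N}^{M}\frac{\varphi_n(x)^2}{\lambda_n^{2}}\Big)^{1/2}.
\]
The first factor tends to $0$ as $N\to\infty$ by Bessel's inequality, because $g\in\mathcal H$. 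For the second, I would invoke the eigenvalue and eigenfunction asymptotics of the previous section: $|\lambda_n|$ grows like $n^2$ and $\sup_x|\varphi_n(x)|=O(1)$, so $\sum_{n\ge N}\varphi_n(x)^2/\lambda_n^{2}$ is bounded by a constant multiple of $\sum_{n\ge N}n^{-4}$, uniformly in $x$. (Equivalently, one may apply a Mercer-type theorem to the continuous symmetric nonnegative iterated kernel $G_2(x,\xi)=\rho_{12}\int_{-\pi}^{0}G(x,s)G(s,\xi)\,ds+\rho_{34}\int_{0}^{\pi}G(x,s)G(s,\xi)\,ds=\sum_n\lambda_n^{-2}\varphi_n(x)\varphi_n(\xi)$, whose diagonal is $\sum_n\lambda_n^{-2}\varphi_n(x)^2$.) Hence the tail of $\sum c_n\varphi_n$ is uniformly small, the series converges absolutely and uniformly on $[-\pi,0)\cup(0,\pi]$, and its sum is the function $f(x)$ identified in the previous paragraph.

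The step I expect to be the real obstacle is precisely this uniform-convergence estimate: it depends on having sharp enough asymptotics for $\lambda_n$ and genuinely uniform bounds for the normalized eigenfunctions $\varphi_n$ in the transmission setting (or, in the Mercer route, on verifying the regularity hypotheses for $G_2$ despite the jumps of $G$ across the lines $x=0$ and $s=0$), and it also underlies the term-by-term integration of (\ref{gr9}) used above. Everything else is a routine bookkeeping of the weighted inner product $\langle\cdot,\cdot\rangle_{\mathcal H}$ and the self-adjointness established in (\ref{sim}).
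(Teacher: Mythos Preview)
Your proposal is correct and is exactly the argument the paper's setup is pointing to: the paper leaves the proof environment for this theorem empty, so there is nothing to compare against beyond the preparatory material in Section~3. That material culminates in the bilinear expansion (\ref{gr9}) of $G(x,\xi)$, and your proof is the standard way to turn (\ref{gr9}) into the expansion theorem---represent $f$ via the resolvent at $\lambda=0$ applied to $g=\Gamma f$, insert (\ref{gr9}), identify the coefficients using the symmetry (\ref{sim}), and control the tail by Cauchy--Schwarz together with the $\lambda_n\sim n^2$ asymptotics. Your own caveat about the uniform bound $\sup_x|\varphi_n(x)|=O(1)$ (or the Mercer route for $G_2$) is well placed; the paper simply asserts the needed asymptotics ``obtained in previous section'' without recording them, so you are not missing anything the paper supplies.
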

\begin{proof}
\end{proof}
\begin{theorem}(Parseval equality) \label{teo2}For any square-integrable function $f(x)$
in the interval $[-1, 0)\cup (0, 1]$, the Parseval equality
\begin{eqnarray}\label{gr17}
\rho_{12}\int\limits_{-\pi}^{0}f^{2}(x)dx+ \rho_{34}
\int\limits_{0}^{\pi} f^{2}(x)dx=\sum_{n=0}^{\infty}c^{2}_{n}
\end{eqnarray} holds.
\end{theorem}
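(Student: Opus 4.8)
The plan is to derive (\ref{gr17}) from the fact that the normalized eigenfunctions $\{\varphi_n\}_{n\ge 0}$ form a complete orthonormal system in the Hilbert space $\mathcal{H}=L_{2}[-\pi,0)\oplus L_{2}(0,\pi]$ equipped with the weighted inner product $\langle\cdot,\cdot\rangle_{\mathcal{H}}$. First I would record the routine half: the $\varphi_n$ are mutually orthogonal and normalized in $\mathcal{H}$, and the coefficient $c_n$ of (\ref{grr14}) is precisely $\langle f,\varphi_n\rangle_{\mathcal{H}}$, so Bessel's inequality gives $\sum_{n=0}^{N}c_n^2\le\langle f,f\rangle_{\mathcal{H}}$ for every $N$. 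Hence $\sum c_n^2$ converges, the partial sums $s_N=\sum_{n=0}^{N}c_n\varphi_n$ are Cauchy in $\mathcal{H}$ and converge to some $g\in\mathcal{H}$ with $\langle g,\varphi_n\rangle_{\mathcal{H}}=c_n$. Setting $h:=f-g$ we get $\langle h,\varphi_n\rangle_{\mathcal{H}}=0$ for all $n$, and since $\langle f,f\rangle_{\mathcal{H}}=\langle g,g\rangle_{\mathcal{H}}+\langle h,h\rangle_{\mathcal{H}}=\sum c_n^2+\langle h,h\rangle_{\mathcal{H}}$, the identity (\ref{gr17}) is equivalent to $h=0$, i.e.\ to completeness of $\{\varphi_n\}$.

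For the completeness step I would use the material already assembled in Section~3. Because $\lambda=0$ is arranged not to be an eigenvalue (the shift by $\eta$ in (\ref{.1})), the map $T:f\mapsto\rho_{12}\int_{-\pi}^{0}G(x,s)f(s)\,ds+\rho_{34}\int_{0}^{\pi}G(x,s)f(s)\,ds$ is a bounded, injective, self-adjoint operator on $\mathcal{H}$ whose kernel $G(x,s)$ is continuous and symmetric, hence $T$ is Hilbert--Schmidt, in particular compact. By the Hilbert--Schmidt expansion theorem its eigenfunctions span $\overline{\operatorname{ran}T}$, and injectivity of $T$ forces $\overline{\operatorname{ran}T}=\mathcal{H}$; since these eigenfunctions are exactly the $\varphi_n$, they form a complete system. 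Equivalently one may repeat almost verbatim the ``$P(x,\xi)$ has no eigenfunction'' argument that precedes Theorem~\ref{teo1}: if $h\perp\varphi_n$ for every $n$, then $h$ satisfies the integral equation (\ref{gr8}) with the kernel $P=G+K$, so it is an eigenfunction of $P$ orthogonal to every $\varphi_n$, hence orthogonal to itself, whence $h=0$. Either way, $f=\sum_{n\ge 0}c_n\varphi_n$ in $\mathcal{H}$ and $\langle f,f\rangle_{\mathcal{H}}=\lim_N\langle s_N,s_N\rangle_{\mathcal{H}}=\sum_{n\ge 0}c_n^2$, which is (\ref{gr17}) written out with the weights $\rho_{12},\rho_{34}$.

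A second, softer route avoids re-proving compactness and leans on Theorem~\ref{teo1} directly: for $f\in C^2$ satisfying the transmission conditions (\ref{1.2})--(\ref{1.3}) and the boundary conditions (\ref{1.4})--(\ref{1.5}), that theorem gives uniform convergence of $\sum c_n\varphi_n$ to $f$ on the finite interval, which upgrades to $\mathcal{H}$-convergence and hence to (\ref{gr17}) for such $f$; since this class is dense in $\mathcal{H}$ and $f\mapsto(c_n)$ is a contraction into $\ell^2$ by Bessel, one approximates a general $f\in\mathcal{H}$ by nice $f_k$ and passes to the limit in $\langle f_k,f_k\rangle_{\mathcal{H}}=\sum_n (c_n^{(k)})^2$. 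The main obstacle in either approach is precisely this completeness/density input: the Bessel estimate and the bookkeeping of partial sums are automatic, but one must genuinely use the spectral structure --- the self-adjointness and compactness of $T$ on the \emph{correctly weighted} space $\mathcal{H}$ (so that the kernel's symmetry matches the orthogonality (\ref{2.3})), or an approximation lemma producing $C^2$ functions that meet the interface conditions at $x=0$. I would also be careful to keep $\lambda=0$ out of the spectrum throughout, so that $T$ is injective and no eigenfunction is dropped.
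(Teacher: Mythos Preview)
Your proposal is sound. In fact, the paper's own proof environment for this theorem is empty: between \texttt{\textbackslash begin\{proof\}} and \texttt{\textbackslash end\{proof\}} there is nothing at all (the same is true for Theorem~\ref{teo1}). So there is no argument in the paper to compare against; you are supplying what the authors omitted.

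Both of the routes you sketch are correct and standard. The first one --- Bessel's inequality plus completeness of $\{\varphi_n\}$ via the compact self-adjoint integral operator $T$ with kernel $G(x,s)$ --- is exactly the machinery the paper assembles in Section~3 just \emph{before} the theorem statements: the argument there that the auxiliary kernel $P(x,\xi)=G(x,\xi)+K(x,\xi)$ has no eigenfunction and hence vanishes identically is precisely the completeness statement you need, and you correctly point out that it can be rephrased as ``$T$ is Hilbert--Schmidt, self-adjoint in the weighted space $\mathcal{H}$, and injective because $\lambda=0$ is not an eigenvalue, hence its eigenfunctions span $\mathcal{H}$.'' Your second route, deducing Parseval for smooth $f$ satisfying the boundary and transmission conditions from the uniform convergence in Theorem~\ref{teo1} and then passing to general $f\in\mathcal{H}$ by density, is also fine; the cleanest way to phrase the limiting step is that Theorem~\ref{teo1} shows every such smooth $f$ lies in the closed linear span of the $\varphi_n$, that span therefore contains a dense subspace, and completeness follows.

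Two small points worth making explicit if you write this up. First, $G(x,s)$ is bounded on the square (continuous off the interface lines $x=0$, $s=0$), which is all you need for $T$ to be Hilbert--Schmidt; you do not need full continuity. Second, the self-adjointness of $T$ in $\mathcal{H}$ uses both the pointwise symmetry $G(x,s)=G(s,x)$ \emph{and} the fact that the same weights $\rho_{12},\rho_{34}$ appear in the definition of $T$ (formula~(\ref{gr3})) and in the inner product $\langle\cdot,\cdot\rangle_{\mathcal{H}}$; you note this, and it is indeed the point where the modified inner product earns its keep.
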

\begin{proof}
\end{proof}
\section{The modified Carleman  equation%
} We now return to the formula
\begin{eqnarray}\label{gr28}
y(x,\lambda)&=& \rho_{12} \int\limits_{-\pi}^{0}
G(x,s;\lambda)f(s)ds+ \rho_{34}  \int\limits_{0}^{\pi}
G(x,s;\lambda)f(s)ds
\end{eqnarray}
whose right-hand side has been called the resolvent. Let
\begin{eqnarray}\label{gr30}
y(x,\lambda)= \sum_{n=0}^{\infty}b_{n}(\lambda)\varphi_{n}(x), \
c_{n}=\rho_{12}\int\limits_{-\pi}^{0}f(x)\varphi_{n}(x)dx+ \rho_{34}
\int\limits_{0}^{\pi} f(x)\varphi_{n}(x)dx.
\end{eqnarray}
Then, we have
\begin{eqnarray}\label{gr31}
c_{n}&&=\rho_{12}\int\limits_{-\pi}^{0}\{y(x)''-q(x)y(x\}\varphi_{n}(x)dx+
\rho_{34}  \int\limits_{0}^{\pi}
\{y(x)''-q(x)y(x)\}\varphi_{n}(x)dx\nonumber\\&&=-\lambda_{n}b_{n}(\lambda)+b_{n}(\lambda).
\end{eqnarray}
Hence, $b_{n}(\lambda)=\frac{c_{n}}{\lambda-\lambda_{n}}$ and the
expansion of the resolvent is
\begin{eqnarray}\label{gr32}
y(x,\lambda)=\rho_{12} \int\limits_{-\pi}^{0} G(x,s;\lambda)f(s)ds+
\rho_{34} \int\limits_{0}^{\pi} G(x,s;\lambda)f(s)ds=
\sum_{n=0}^{\infty}\frac{c_{n}}{\lambda-\lambda_{n}}
\end{eqnarray}
An important formula can now be derived from the above. Substituting
the equality
\begin{eqnarray}\label{gr33}
c_{n}=\rho_{12}\int\limits_{-\pi}^{0}f(s)\varphi_{n}(s)ds+ \rho_{34}
\int\limits_{0}^{\pi} f(s)\varphi_{n}(s)ds
\end{eqnarray}
on the right-hand side, we see that
\begin{eqnarray}\label{gr34}
&&\rho_{12}\int\limits_{-\pi}^{0} G(x,s;\lambda)f(s)ds+ \rho_{34}
\int\limits_{0}^{\pi}
G(x,s;\lambda)f(s)ds\nonumber\\&&=\sum_{n=0}^{\infty}\frac{\varphi_{n}(x)}{\lambda-\lambda_{n}}\{\rho_{12}\int\limits_{-\pi}^{0}f(s)\varphi_{n}(s)dx+
\rho_{34}  \int\limits_{0}^{\pi} f(s)\varphi_{n}(s)dx\}.
\end{eqnarray}
Since f(s) is arbitrary,
\begin{eqnarray}\label{gr35}
G(x,s;t)=\sum_{n=0}^{\infty}\frac{\varphi_{n}(x)\varphi_{n}(s)}{t-\lambda_{n}}.
\end{eqnarray}
Thus we obtain
\begin{eqnarray}\label{gr36}
 \rho_{12}\int\limits_{-\pi}^{0} G(x,x;t)dt+
\rho_{34}  \int\limits_{0}^{\pi} G(x,x;t)dx=
\sum\limits_{n=0}^{\infty}\frac{1}{t-\lambda_{n}}
\end{eqnarray}
Put $N(\lambda)=\sum_{0\leq\lambda_{n}\leq\lambda}1$ is number of
eigenvalues $\lambda_{n}$ less than $\lambda$. we get from t
(\ref{gr36}) the modified Carleman equation
\begin{eqnarray}\label{gr37}
\rho_{12} \int\limits_{-\pi}^{0} G(x,x;t)dx+ \rho_{34}
\int\limits_{0}^{\pi} G(x,x;t)dx= \sum_{n=0}^{\infty}\frac{d
N(\lambda)}{t-\lambda}.
\end{eqnarray}

\end{document}